\theoremstyle{plain}
\newtheorem{theorem}[equation]{Theorem}
\newtheorem{proposition}[equation]{Proposition}
\newtheorem{lemma}[equation]{Lemma}
\theoremstyle{remark}
\newcommand{\BPol}{\mathbf{B}_{\Omega}^{\lambda}}
\newcommand{\BKol}{B_{\Omega}^{\lambda}}
\newcommand{\Ltol}{L^2(\Omega,\lambda)}
\newcommand{\Ltola}{L^2_a(\Omega,\lambda)}
\newcommand{\Lpol}{L^p(\Omega,\lambda)}
\newcommand{\exprho}{\exp\left(\frac{1}{\rho}\right)}
\newcommand{\BPdm}{\mathbf{B}_{\mathbb{D}}^{\mu}}
\newcommand{\BKdm}{B_{\mathbb{D}}^{\mu}}
\begin{document}

\title[The Bergman Projection Operator On Reinhardt Domains]{Mapping Properties of Weighted Bergman Projection Operators on Reinhardt Domains}
\author{\v Zeljko \v Cu\v ckovi\'c}
\address[\v Zeljko \v Cu\v ckovi\'c]{University of Toledo, Department of Mathematics and Statistics, Toledo, OH 43606}
\email{zeljko.cuckovic@utoledo.edu}

\author{Yunus E. Zeytuncu}
\address[Yunus E. Zeytuncu]{University of Michigan - Dearborn, Department of Mathematics and Statistics, Dearborn, MI 48128}
\email{zeytuncu@umich.edu}

\subjclass[2010]{Primary: 32A25; Secondary: 32A26, 32A36}
\keywords{Bergman projection; exponential weights, $L^p$ regularity, Sobolev regularity }

\date{}
\begin{abstract}
We show that on smooth complete Reinhardt domains, weighted Bergman projection operators corresponding to exponentially decaying weights are unbounded on $L^p$ spaces for all $p\not=2$. On the other hand, we also show that  the exponentially weighted projection operators are bounded on Sobolev spaces on the unit ball.
\end{abstract}
\maketitle
\section{Introduction}

Let $\Omega$ be a bounded domain in $\mathbb{C}^n$ and let $\lambda$ be a positive continuous function on $\Omega$. We denote the standard Lebesgue measure by $dV(z)$ and we consider $\lambda$ as a weight function on $\Omega$. The space of square integrable functions on $\Omega$ with respect to the measure $\lambda(z)dV(z)$ is denoted by $\Ltol$. The weighted inner product and the corresponding norm are defined in the usual way. The space of square integrable holomorphic functions on $\Omega$ is denoted by $\Ltola$. Since the weight is continuous and positive, $\Ltola$ is a closed subspace of $\Ltol$ (see \cite{Pasternak90}). The weighted Bergman projection operator $\BPol$ is the orthogonal projection operator from $\Ltol$ onto $L^2_a(\Omega,\lambda)$. By the Riesz representation theorem, $\BPol$ is an integral operator of the form 

$$\BPol f(z)=\int_{\Omega}\BKol (z,w)f(w)\lambda(w)dV(w)$$
where the kernel $\BKol (z,w)$ is called the weighted Bergman kernel. As an integral operator, mapping properties of $\BPol$ on $\Lpol$ for $p\not=2$ are also of interest. In particular, for a given pair  $\Omega$ and $\lambda$, one can ask  the question for which $p\in(1,\infty)$ is the operator $\BPol$ bounded on $L^p(\Omega,\lambda)$.

This question has been addressed in many settings and we refer the reader to \cite{LanzaniStein04, Ligocka89, Charp06, KrantzPeloso07Statements, ChangLi, SonmezBarrett} and the references therein. One of the well studied settings is the case when $\Omega$ is the unit disk $\mathbb{D}$ in $\mathbb{C}^1$. Two extreme cases stand out on $\mathbb{D}$. Let $\rho(z)=|z|^2-1$ be the standard defining function\footnote{Both of the results hold for more general defining functions.} for $\mathbb{D}$.

\begin{itemize}
\item (Polynomial decay \cite{ForelliRudin}) If $\lambda=(-\rho)^k$ for some $k>0$ then the weighted Bergman projection operator $\mathbf{B}_{\mathbb{D}}^{\lambda}$ is bounded on $L^p(\mathbb{D},\lambda)$ for all $p\in (1,\infty)$.

\item (Exponential decay  \cite{Dostanic04, ZeytuncuTran}) On the other hand, if $\lambda=\exp\left(\frac{1}{\rho}\right)$  then the weighted Bergman projection operator $\mathbf{B}_{\mathbb{D}}^{\lambda}$ is bounded on $L^p(\mathbb{D},\lambda)$ if and only if $p=2$. 
\end{itemize}
The change in $\lambda$, from polynomial decay to exponential decay, causes a drastic change in $L^p$ boundedness of weighted Bergman projections.

We are interested in whether the same phenomenon happens on domains in higher dimensions when we change the weight from polynomial to exponential decay.

\subsection{$L^p$  irregularity} We start our investigation on smooth complete Reinhardt domains. In this setting, the polynomial decay case (under some additional conditions on $\Omega$) follows from a more general theorem in  \cite{Charpentier13, Charpentier14}. Namely, if  $\Omega$ is a smooth bounded Reinhardt pseudoconvex domain of finite type in $\mathbb{C}^2$, $\rho$ is a defining function for $\Omega$ and $\lambda=(-\rho)^q$ where $q$ is a non-negative rational number then the  weighted Bergman projection operator $\mathbf{B}_{\Omega}^{\lambda}$ is bounded on $L^p(\Omega,\lambda)$ for all $p\in(1,\infty)$. See \cite{Charpentier13, Charpentier14}  and also \cite{BonamiGrellier} for the case $q$ is a natural number and \cite{McNSte94} for the case $q=0$. 

This result is analogous to the first result on the unit disk; that is, the weight decays at a polynomial rate on the boundary. Hence, it is natural to ask what happens if we take an exponentially decaying weight $\lambda=\exprho$ in this setting. We present an answer in the following theorem.

\begin{theorem}\label{main}
Let $\Omega$ be a smooth bounded complete Reinhardt domain in $\mathbb{C}^2$ and let $\rho$ be a smooth multi-radial defining function for $\Omega$. If $\lambda=\exprho$ then the weighted Bergman projection $\BPol$ is bounded on $\Lpol$ if and only if $p=2$.
\end{theorem}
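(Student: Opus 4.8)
The plan is to prove $L^p$-boundedness fails for $p\neq 2$ by exploiting the Reinhardt symmetry to diagonalize the projection against monomials, then extracting a contradiction from the growth of the normalized kernel. Because $\Omega$ is complete Reinhardt and $\lambda=\exprho$ is multi-radial (invariant under the torus action $z\mapsto(e^{i\theta_1}z_1,e^{i\theta_2}z_2)$), the monomials $\{z^\alpha\}_{\alpha\in\mathbb{N}^2}$ form an orthogonal basis of $\Ltola$, and the weighted Bergman kernel has the explicit diagonal form
\begin{equation*}
\BKol(z,w)=\sum_{\alpha}\frac{z^\alpha \overline{w}^\alpha}{c_\alpha},\qquad c_\alpha=\int_\Omega |z^\alpha|^2\lambda(z)\,dV(z).
\end{equation*}
This reduces every computation to understanding the moments $c_\alpha$ and how fast they decay as $|\alpha|\to\infty$. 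The key point, exactly as on the disk, is that exponential decay of $\lambda$ near the boundary forces $c_\alpha$ to decay \emph{super-polynomially} (faster than any power of $1/|\alpha|$), whereas polynomial weights give only polynomial decay; this gap is what destroys $L^p$-boundedness off $p=2$.

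First I would fix a boundary point and work in a one-dimensional slice, reducing to the disk via the complete Reinhardt structure. Concretely, I expect to choose a suitable curve or coordinate along which $\rho$ restricts to (a comparable multiple of) the disk defining function, so that the estimates of \cite{Dostanic04, ZeytuncuTran} on $\mathbb{D}$ can be transplanted. Then I would test $\BPol$ against an explicit family of functions—either monomials $z^\alpha$ themselves or sharply concentrated bumps $f_t$ peaking near the boundary—and compute $\|\BPol f_t\|_{\Lpol}/\|f_t\|_{\Lpol}$. The strategy is to show, using the diagonal kernel and Laplace-type asymptotics for $c_\alpha$ (the integral $\int |z^\alpha|^2 e^{1/\rho}\,dV$ is dominated by a boundary neighborhood and evaluated by steepest descent), that this ratio is unbounded as $t$ approaches the boundary or $|\alpha|\to\infty$ whenever $p\neq 2$. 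The necessity of $p=2$ being trivial (it is the Hilbert-space case), the whole content is this unboundedness for $p\neq 2$, and by duality it suffices to treat one of $p>2$ or $p<2$.

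The main obstacle I anticipate is passing from the one-variable asymptotics to genuinely two-dimensional geometry: the defining function $\rho$ and the domain $\Omega$ are arbitrary smooth multi-radial, so the boundary is a curved real hypersurface in $\mathbb{C}^2$ rather than a product of disks, and the moments $c_\alpha$ are governed by a two-dimensional Laplace asymptotic whose critical point and Hessian depend on the ratio $\alpha_1/\alpha_2$. I expect the hard technical step to be a uniform asymptotic analysis of $c_\alpha$ along rays $\alpha_2/\alpha_1\to s$ as $|\alpha|\to\infty$, showing that the super-polynomial decay rate inherited from the exponential weight persists in all directions transverse to the boundary, not merely along axis slices where $\Omega$ degenerates to a disk. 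Controlling the off-diagonal contribution and verifying that no cancellation rescues boundedness will require care; the completeness of the Reinhardt domain and multi-radiality of $\rho$ are precisely the hypotheses that keep the kernel diagonal and make these integrals tractable.

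Once the asymptotics are in hand, I would conclude by a Schur-test / testing-function dichotomy: exhibit functions for which the integral operator's output norm blows up, certifying unboundedness on $\Lpol$ for every $p\neq 2$, and thereby complete the proof of Theorem~\ref{main}.
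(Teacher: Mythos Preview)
Your proposal correctly identifies the diagonal structure of the kernel and the role of the moments, but it misplaces the difficulty in two ways.

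First, the two-dimensional Laplace analysis you anticipate as the ``main obstacle''---uniform asymptotics for $c_\alpha$ along all rays $\alpha_2/\alpha_1\to s$---is entirely unnecessary. The paper's reduction is much cleaner: integrate $\lambda$ over the $z_2$-slice to produce a radial weight $\mu(z_1)=\int_{S_{z_1}}\lambda\,dA(z_2)$ on $\mathbb{D}$, and observe (Lemma~\ref{four}, Proposition~\ref{one}) that on functions depending only on $z_1$ the operator $\BPol$ acts exactly as $\BPdm$. Hence unboundedness of $\BPdm$ on $L^p(\mathbb{D},\mu)$ immediately gives unboundedness of $\BPol$ on $\Lpol$. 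No asymptotics in the $\alpha_2$-direction are ever needed; the problem is genuinely one-dimensional.

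Second, the one-dimensional problem that remains is harder than you suggest. You write that you would ``transplant the estimates of \cite{Dostanic04, ZeytuncuTran},'' but those results treat explicit weights like $\exp(-1/(1-|z|^2))$, whereas $\mu$ here is defined implicitly through an arbitrary smooth defining function $\rho$. The paper handles this (Proposition~\ref{two}) by a moment-function argument: repeated integration by parts in $r_1$ produces a factor $1/[(x+2)\cdots(x+n+1)]$ for each $n$, and after isolating a boundary strip where the integrand is positive, log-convexity forces $x^2(\log\Phi)''(x)\gtrsim n$ for large $x$. Testing on the non-holomorphic monomials $z^{km}\bar z^m$ (note: pure monomials $z^\alpha$ are fixed by the projection and useless as test functions) then gives a norm ratio $\gtrsim e^{cn}$, which blows up. Your proposal does not yet contain either the slicing reduction that avoids the two-variable asymptotics or the integration-by-parts/log-convexity mechanism that replaces explicit moment estimates.
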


Here, by a multi-radial function $\rho(z_1, z_2)$ we mean a function that depends on $|z_1|$ and $|z_2|$, i.e. $\rho(z_1,z_2)=\rho(|z_1|,|z_2|)$. 
The same result holds on smooth bounded complete Reinhardt domains in $\mathbb{C}^n$ for $n\geq 3$. However, we will present the proof below only in two dimensions to avoid clumsy notation.

\subsection{Sobolev regularity on the unit ball} Let $\mathbb{B}^n$ denote the unit ball in $\mathbb{C}^n$ and $r(z_1,\cdots,z_n)=|z_1|^2+\cdots+|z_n|^2-1$ be the standard defining function for $\mathbb{B}^n$. If we let $\mu(z)=\exp\left(\frac{1}{r}\right)$ then by Theorem \ref{main} the weighted Bergman projection $\mathbf{B}^{\mu}_{\mathbb{B}^n}$ is not bounded on $L^p(\mathbb{B}^n,\mu)$ for $p\not=2$.
Even though $\mathbf{B}^{\mu}_{\mathbb{B}^n}$ exhibits this behavior on $L^p$ spaces, its behavior is more predictable on $L^2$-Sobolev spaces. In particular, Sobolev regularity of $\mathbf{B}^{\mu}_{\mathbb{B}^n}$ is not sensitive to decay rates. It is also noted in \cite{Charpentier13, Charpentier14} that the weighted Bergman projection corresponding to the polynomially decaying weight $(-r)^k$ is bounded on Sobolev spaces. 

For $k\in\mathbb{N}$, we denote by $W^k(\mathbb{B}^n,\mu)$ the weighted $L^2$-Sobolev space with the norm 
$$||f||_{k,\mu}^2=\sum_{|\beta+\gamma|\leq k}\int_{\mathbb{B}^n}\left|\frac{\partial^{\beta+\gamma}}{\partial \overline{z}^{\beta}\partial z^{\gamma}}f(z)\right|^2\mu(z)dV(z).$$
For the exponentially decaying weight $\mu$ on the unit ball, we prove the following boundedness result on weighted $L^2$-Sobolev spaces.

\begin{theorem}\label{Sobolev}
Let $\mathbb{B}^n, r(z)$ and $\mu$ be as above. For any $k\in \mathbb{N}$, the weighted Bergman projection $\mathbf{B}^{\mu}_{\mathbb{B}^n}$ is bounded on $W^k(\mathbb{B}^n,\mu)$.
\end{theorem}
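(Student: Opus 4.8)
The plan is to exploit the rotational symmetry of $\mu$ to diagonalize the projection and then intertwine the Euler operator with it. Since $\mu(z)=\exp(1/r)$ depends only on $|z|$, the monomials $\{z^\alpha\}$ are pairwise orthogonal in $L^2(\mathbb{B}^n,\mu)$, and integrating in polar coordinates gives $c_\alpha:=\|z^\alpha\|_\mu^2=\pi^n\frac{\alpha!}{(|\alpha|+n-1)!}M(|\alpha|)$, where $M(N)=\int_0^1 u^{N+n-1}\exp\!\big(1/(u-1)\big)\,du$ is a one-variable radial moment. Thus $\{z^\alpha/\sqrt{c_\alpha}\}$ is an orthonormal basis of the weighted Bergman space and, writing $\langle\cdot,\cdot\rangle_\mu$ and $\|\cdot\|_\mu$ for the $L^2(\mathbb{B}^n,\mu)$ pairing and norm, $\BPbm f=\sum_\alpha a_\alpha z^\alpha$ with $a_\alpha=\langle f,z^\alpha\rangle_\mu/c_\alpha$. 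Because $\BPbm f$ is holomorphic, only pure holomorphic derivatives survive in its Sobolev norm, so by the orthogonality of the monomials $z^{\alpha-\gamma}$,
$$\|\BPbm f\|_{k,\mu}^2=\sum_{|\gamma|\le k}\|\partial_z^\gamma\BPbm f\|_\mu^2=\sum_\alpha|a_\alpha|^2\,d_\alpha^{(k)},\qquad d_\alpha^{(k)}=\sum_{|\gamma|\le k,\ \gamma\le\alpha}\Big(\tfrac{\alpha!}{(\alpha-\gamma)!}\Big)^2 c_{\alpha-\gamma}.$$

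The first lemma I would prove is the comparison $d_\alpha^{(k)}\le C(1+|\alpha|)^{2k}c_\alpha$, equivalently, for holomorphic $g$, $\|g\|_{k,\mu}\lesssim\|(1+E)^k g\|_\mu$ where $E=\sum_j z_j\partial_{z_j}$ is the Euler operator (note $(1+E)^k z^\alpha=(1+|\alpha|)^k z^\alpha$). Expanding $c_{\alpha-\gamma}/c_\alpha$, the combinatorial factors contribute $O(|\alpha|^{2|\gamma|})$ and the only weight-dependent input is the radial ratio $M(|\alpha|-|\gamma|)/M(|\alpha|)$. Since $u\in(0,1)$ forces $M$ to be decreasing and $M(N-1)/M(N)\to 1$ as $N\to\infty$, these ratios are bounded uniformly, giving the comparison. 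It is worth stressing that this step uses only uniform boundedness of the elementary moment ratios, a property insensitive to the precise decay rate of $\mu$; this is the analytic reason behind the assertion that Sobolev regularity is not sensitive to the decay rate.

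The key step is an intertwining identity. Let $\widetilde{E}=\sum_j(z_j\partial_{z_j}-\bar z_j\partial_{\bar z_j})$ be the generator of the rotation action $f\mapsto f(e^{i\theta}\cdot)$. Since $\mu$ is radial this action is unitary on $L^2(\mathbb{B}^n,\mu)$ and preserves holomorphy, so $\BPbm$ commutes with it and hence with $\widetilde{E}$. On holomorphic functions $\widetilde{E}$ agrees with $E$, since the antiholomorphic part annihilates them; combining this with the commutation yields $(1+E)^k\BPbm f=(1+\widetilde{E})^k\BPbm f=\BPbm(1+\widetilde{E})^k f$. Putting the pieces together — the comparison from the previous paragraph, the contractivity of $\BPbm$ on $L^2(\mathbb{B}^n,\mu)$, and the fact that $(1+\widetilde{E})^k$ is a differential operator of order $k$ whose coefficients are polynomials in $z,\bar z$ bounded on $\overline{\mathbb{B}^n}$ — gives
$$\|\BPbm f\|_{k,\mu}\lesssim\|(1+E)^k\BPbm f\|_\mu=\|\BPbm(1+\widetilde{E})^k f\|_\mu\le\|(1+\widetilde{E})^k f\|_\mu\lesssim\|f\|_{k,\mu},$$
which is the asserted bound.

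The genuinely decisive observation is the choice of $\widetilde{E}$: one needs an operator on all of $L^2(\mathbb{B}^n,\mu)$ that commutes with $\BPbm$ yet restricts to the Euler operator on the holomorphic image, and the rotation generator supplies exactly this. Once that is in hand the estimate is essentially formal, and the only remaining care is to justify the operator manipulations on a dense class (e.g.\ functions smooth up to the boundary), using that $\mu$ and all its derivatives vanish on $\partial\mathbb{B}^n$ so that the relevant integrations by parts carry no boundary terms. The same argument works verbatim in one variable, recovering the boundedness part in the disk setting discussed above.
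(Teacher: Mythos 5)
Your proposal is correct, but it takes a genuinely different route from the paper. The paper runs Boas's duality scheme: it establishes the precise asymptotics $d_\gamma^2\approx e^{-2\sqrt{|\gamma|+1}}(|\gamma|+1)^{-1/3}\,\gamma_1!\cdots\gamma_n!/(|\gamma|+1)!$ (Lemma \ref{ds}, via Dostani\'c's Laplace-type estimate \eqref{kappa}), deduces the reverse Cauchy--Schwarz inequality $d_\alpha d_{\alpha+2\beta}\le K_\beta d_{\alpha+\beta}^2$ (Lemma \ref{coeff}), uses it to build the bounded operator $M_\beta$ that moves $\partial^\beta$ across the pairing (Lemma \ref{operator}), and combines this with integration by parts along tangential vector fields (Lemma \ref{parts}) and truncations $S_j$ in a duality estimate. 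You instead diagonalize everything: the exact Dirichlet-type formula $c_\alpha=\pi^n\frac{\alpha!}{(|\alpha|+n-1)!}M(|\alpha|)$ reduces the needed coefficient comparison to one-dimensional moment ratios $M(N-j)/M(N)$, and the commutation of $\BPbm$ with the rotation group supplies the intertwining $(1+E)^k\BPbm=\BPbm(1+\widetilde{E})^k$, after which the bound is a four-step chain of inequalities. What your route buys is that it bypasses what the paper calls the heart of the matter---the sharp asymptotics \eqref{dse}---since only \emph{uniform boundedness} of $M(N-1)/M(N)$ is needed, and that is elementary; this also makes transparent the paper's remark that Sobolev regularity is insensitive to the decay rate. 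What the paper's route buys is that Lemmas \ref{operator} and \ref{parts} are formulated so as to apply to any smooth complete Reinhardt domain with an exponentially decaying weight, isolating the coefficient estimate \eqref{key} as the only domain-specific input; your argument, like theirs, ultimately relies on the ball's radial symmetry through the exact formula for $c_\alpha$, so neither generalizes for free.

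Two spots in your write-up are asserted rather than proved, though both are fillable. First, ``$M$ is decreasing and $M(N-1)/M(N)\to 1$'' is not a justification; what you actually need, and what is elementary, is the splitting bound
\begin{align*}
M(N-1)\le \tfrac{1}{1-\delta}M(N)+(1-\delta)^{N+n-2}\int_0^{1-\delta}e^{1/(u-1)}du,
\qquad
M(N)\ge (1-\delta/2)^{N+n-1}\int_{1-\delta/2}^{1}e^{1/(u-1)}du,
\end{align*}
which gives $M(N-1)/M(N)\le C$ uniformly (and, letting $\delta\to 0$ after $N\to\infty$, the limit $1$ if you want it). Second, the identity $\BPbm\widetilde{E}=\widetilde{E}\BPbm$ should be obtained by differentiating the exact commutation $\BPbm R_\theta=R_\theta\BPbm$ at $\theta=0$ in $L^2(\mathbb{B}^n,\mu)$ for $f$ smooth up to the boundary, together with density of such functions in $W^k(\mathbb{B}^n,\mu)$ (available by dilation $f\mapsto f(t\cdot)$, using that $\mu$ is radial and decreasing in $|z|$ so the dilations are uniformly bounded); the appeal to integration by parts with vanishing boundary terms is not actually what is needed there. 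These are presentation gaps, not mathematical ones.
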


It is a natural question to investigate the similar result for non-integer values of $k$. This boils down studying the interpolation properties of Sobolev spaces with respect to exponential weights. We postpone this investigation to a future project. 

The unweighted version of this theorem on general smooth complete Reinhardt domains is in \cite{Boas84} and \cite{Straube86}. We imitate the second proof in \cite{Boas84}; however, instead of a Brunn-Minkowski type inequality we use an asymptotic estimate. It will be clear in the proof that generalizing Theorem \ref{Sobolev} to general Reinhardt domains with general exponentially decaying weights is not immediate.

We write $A\lesssim B$ to mean that there exists a uniform constant $c>0$ such that $A\leq cB$. We also use $A\approx B$ to mean that there exists a uniform constant $k>0$ such that $\frac{1}{k}A\leq B\leq kA$.



\section{Proof of Theorem \ref{main}}

\subsection{Preliminaries} Since $\Omega$ is a bounded Reinhardt domain, its projection onto $z_1$ axis is a disk and without loss of generality we can assume that this projection is the unit disk. We denote the radial image of $\Omega$ (that is the image in the $|z_1|$ and $|z_2|$ plane) by $R \subset \mathbb{R}^2$.

For a point $z_1\in\mathbb{D}$, let $S_{z_1}$ denote the slice of $\Omega$ over $z_1$ in the $z_2$ direction. Each $S_{z_1}$ is a disk. We define the following auxiliary weight on $\mathbb{D}$,

\begin{equation}\label{mu}
\mu(z_1)=\text{ area of the disk }S_{z_1}=\int_{S_{z_1}}\exprho dA(z_2)
\end{equation}
where $\rho$ is also restricted to $S_{z_1}$ in the first coordinate. It is easy to notice that $\mu$ is a multi-radial function of $z_1$.

The set of monomials $\{z^{\alpha}\}$ (for all $\alpha\in\mathbb{N}^2$) forms an orthogonal basis for the Bergman space $L^2_a(\Omega,\lambda)$. When we set coefficients
\begin{align*}
c_{\alpha}^2=\frac{1}{\int_{\Omega}|z^{\alpha}|^2\lambda(z)dV(z)},
\end{align*}
we get the following representation of the weighted Bergman kernel (see \cite{KrantzSCVbook})
\begin{align*}
\BKol(z,w)=\sum_{\alpha}c^2_{\alpha}z^{\alpha}\overline{w}^{\alpha}.
\end{align*}

\subsection{Reduction to $\BPdm$}

We start with a relation between the weighted Bergman kernel $\BKol$ (defined on $\Omega\times\Omega$) and the weighted Bergman kernel $\BKdm$ (defined on $\mathbb{D}\times\mathbb{D}$).

\begin{lemma}\label{four}
For a fixed $z\in\Omega$,
\begin{align}\label{kernels}
\int_{S_{w_1}}\BKol(z,w)\lambda(w)dA(w_2)
=\mu(w_1)\BKdm (z_1,w_1).
\end{align}
\end{lemma}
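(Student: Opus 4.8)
The plan is to exploit the explicit monomial expansion of the kernel together with the multi-radial structure of both the domain and the weight. First I would substitute the series representation $\BKol(z,w)=\sum_{\alpha}c_{\alpha}^2 z^{\alpha}\overline{w}^{\alpha}$ into the left-hand integral. Writing $\alpha=(\alpha_1,\alpha_2)$ and $w=(w_1,w_2)$, the factor $\overline{w}^{\alpha}=\overline{w_1}^{\alpha_1}\overline{w_2}^{\alpha_2}$ splits, so the integral over the slice $S_{w_1}$ becomes
\begin{align*}
\int_{S_{w_1}}\BKol(z,w)\lambda(w)\,dA(w_2)
=\sum_{\alpha}c_{\alpha}^2\, z^{\alpha}\,\overline{w_1}^{\alpha_1}\int_{S_{w_1}}\overline{w_2}^{\alpha_2}\exprho dA(w_2),
\end{align*}
where $\rho$ is restricted to the slice as in the statement. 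The key observation is that $S_{w_1}$ is a disk centered at the origin and both $\rho$ and hence the weight are radial in $w_2$ on this slice, so by orthogonality of $\{w_2^{\alpha_2}\}$ on a centered disk the inner integral vanishes unless $\alpha_2=0$.

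Next I would evaluate the surviving inner integrals. When $\alpha_2=0$ the factor $\overline{w_2}^{\alpha_2}=1$ and the inner integral is exactly $\int_{S_{w_1}}\exprho\,dA(w_2)=\mu(w_1)$ by the definition \eqref{mu} of the auxiliary weight. Thus the left-hand side collapses to $\mu(w_1)\sum_{\alpha_1}c_{(\alpha_1,0)}^2\,z^{\alpha}\,\overline{w_1}^{\alpha_1}$, where $z^{\alpha}=z_1^{\alpha_1}z_2^{0}=z_1^{\alpha_1}$ when $\alpha_2=0$.

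It then remains to identify the resulting series $\sum_{\alpha_1}c_{(\alpha_1,0)}^2\, z_1^{\alpha_1}\overline{w_1}^{\alpha_1}$ with $\BKdm(z_1,w_1)$. For this I would show that the normalizing constants match: by Fubini the full-domain integral factors as
\begin{align*}
\int_{\Omega}|z^{(\alpha_1,0)}|^2\lambda(z)\,dV(z)
=\int_{\mathbb{D}}|z_1|^{2\alpha_1}\Big(\int_{S_{z_1}}\exprho\,dA(z_2)\Big)dA(z_1)
=\int_{\mathbb{D}}|z_1|^{2\alpha_1}\mu(z_1)\,dA(z_1),
\end{align*}
so $c_{(\alpha_1,0)}^2$ is precisely the inverse of the $L^2(\mathbb{D},\mu)$-norm squared of $z_1^{\alpha_1}$, which is the coefficient appearing in the monomial expansion of $\BKdm$. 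The main point requiring care is the interchange of summation and integration to justify term-by-term integration of the kernel series; I would handle this by the uniform convergence of the Bergman kernel series on compact subsets (for fixed $z$ away from the boundary) together with dominated convergence, since the weight $\lambda$ is integrable. The remaining steps are routine bookkeeping once the orthogonality annihilates all terms with $\alpha_2\neq 0$.
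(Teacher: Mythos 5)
Your overall strategy is exactly the paper's: expand $\BKol(z,w)=\sum_\alpha c_\alpha^2 z^\alpha\overline{w}^\alpha$, integrate term by term over the slice, use radial symmetry of the weight in $w_2$ to kill every term with $\alpha_2\neq 0$, and then identify $c_{(\alpha_1,0)}^2$ with the reciprocal of $\int_{\mathbb{D}}|z_1|^{2\alpha_1}\mu(z_1)\,dA(z_1)$ by Fubini. All of that bookkeeping is correct and matches the paper. The problem is the one step you yourself flag as the point requiring care: your justification of the interchange of summation and integration does not work as stated. The integration domain is the slice $\{w_1\}\times S_{w_1}$, and as $w_2$ ranges over the full disk $S_{w_1}$ the points $(w_1,w_2)$ approach $\partial\Omega$; this set is \emph{not} compactly contained in $\Omega$, so ``uniform convergence of the Bergman kernel series on compact subsets'' gives you nothing here. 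Your fallback, dominated convergence ``since $\lambda$ is integrable,'' is also incomplete: integrability of the weight dominates nothing by itself. What you would need is a dominating function for the partial sums; Cauchy--Schwarz gives $\bigl|\sum_{|\alpha|\leq N}c_\alpha^2 z^\alpha\overline{w}^\alpha\bigr|\leq \BKol(z,z)^{1/2}\BKol(w,w)^{1/2}$, so you would have to show $\int_{S_{w_1}}\BKol(w,w)^{1/2}\lambda(w)\,dA(w_2)<\infty$, i.e.\ a pointwise upper bound on the weighted kernel near the boundary. No such estimate is available at this point in the argument (indeed, controlling this kernel is essentially what the whole paper is about), so the gap is real and not cosmetic.

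The paper closes this gap with a trick specific to Reinhardt domains that your proposal misses: the kernel satisfies the dilation symmetry $\BKol(z,w)=\BKol(tz,t^{-1}w)$ for $t>0$ with $tz,\,t^{-1}w\in\Omega$. Since $z$ is a fixed interior point, one can ``bump $z$ out'' slightly ($t>1$, $tz$ still in $\Omega$) and correspondingly ``bump $w$ in,'' so that the variable of integration $t^{-1}w$ is confined to a compact subset of $\Omega$ on which the series for $\BKol(tz,\cdot)$ converges uniformly; term-by-term integration is then legitimate. If you replace your compactness/dominated-convergence sentence with this dilation argument, the rest of your proof goes through verbatim.
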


\begin{proof}
Since $\Omega$ is Reinhardt, the Bergman kernel has the symmetry property that $$\BKol(z,w)=\BKol(tz,t^{-1}w)$$ for $t>0$ such that $tz$ and $t^{-1}w$ are in $\Omega$. Hence when $z$ is fixed, we can bump $z$ out little bit and bump $w$ in so that $w$ is confined to a compact set. Now for $w_1\in\mathbb{D}$ we have 
\begin{align*}
\int_{S_{w_1}}\BKol(z,w)\lambda(w)dA(w_2)&=\sum_{\alpha}c^2_{\alpha}z^{\alpha}\overline{w_1}^{\alpha_1}\int_{S_{w_1}}\overline{w_2}^{\alpha_2}\exprho dA(w_2).
\end{align*}
The change in order of summation and integration holds since for a fixed $z$, we can keep $w$ in a compact set by the symmetry property above and the sum on the right hand side converges uniformly.
The function $\exprho\vert_{w_1}$ is radially symmetric in $w_2$. Therefore, only $\alpha_2=0$ contributes a non-zero integral above and we get
\begin{align*}
\int_{S_{w_1}}\BKol(z,w)\lambda(w)dA(w_2)&=\sum_{\alpha_1=0}^{\infty}c^2_{(\alpha_1,0)}z_1^{\alpha_1}\overline{w_1}^{\alpha_1}\int_{S_{w_1}}\exprho dA(w_2)\\
&=\mu(w_1)\sum_{\alpha_1=0}^{\infty}c^2_{(\alpha_1,0)}z_1^{\alpha_1}\overline{w_1}^{\alpha_1}.
\end{align*}
We notice that 
\begin{align*}
c^2_{(\alpha_1,0)}&=\frac{1}{\int_{\Omega}|z_1^{\alpha_1}|^2\lambda(z)dV(z)}=\frac{1}{\int_{\mathbb{D}}|z_1^{\alpha_1}|^2\mu(z_1)dA(z_1)},
\end{align*}
and conclude that 
\begin{align*}
\int_{S_{w_1}}\BKol(z,w)\lambda(w)dA(w_2)
=\mu(w_1)\BKdm (z_1,w_1).
\end{align*}

\end{proof}

This observation enables us to prove the following relation between $L^p$ mapping properties of weighted Bergman projections.

\begin{proposition}\label{one}
If the weighted Bergman projection $\mathbf{B}_{\mathbb{D}}^{\mu}$ is unbounded on $L^p(\mathbb{D},\mu)$ for some $p\geq 1$, then so is $\BPol$ on $\Lpol$.
\end{proposition}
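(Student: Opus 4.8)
The plan is to prove the contrapositive: assuming $\BPol$ is bounded on $\Lpol$, I will deduce that $\BPdm$ is bounded on $L^p(\mathbb{D},\mu)$. The bridge between the two operators is a lifting operator that turns a function on the disk into a function on $\Omega$ that is constant along each slice $S_{z_1}$. Given $g$ on $\mathbb{D}$, set $\tilde g(z_1,z_2)=g(z_1)$. First I would record that this lift is an isometry from $L^p(\mathbb{D},\mu)$ into $\Lpol$: integrating $|g(w_1)|^p\lambda(w)$ over each slice and using the definition $\mu(w_1)=\int_{S_{w_1}}\exprho\,dA(w_2)$ from \eqref{mu} gives $\|\tilde g\|_{\Lpol}^p=\|g\|_{L^p(\mathbb{D},\mu)}^p$.

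Next comes the key identity, namely that $\BPol\tilde g$ depends only on $z_1$ and coincides with the lift of $\BPdm g$. To see this I would start from the integral representation $\BPol\tilde g(z)=\int_\Omega \BKol(z,w)g(w_1)\lambda(w)\,dV(w)$, integrate first over the slice $S_{w_1}$, and apply Lemma \ref{four} to replace $\int_{S_{w_1}}\BKol(z,w)\lambda(w)\,dA(w_2)$ by $\mu(w_1)\BKdm(z_1,w_1)$. What remains is exactly $\int_{\mathbb{D}}\BKdm(z_1,w_1)g(w_1)\mu(w_1)\,dA(w_1)=\BPdm g(z_1)$, so that $\BPol\tilde g=\widetilde{\BPdm g}$. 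Applying the isometry once more to the output yields $\|\BPol\tilde g\|_{\Lpol}=\|\BPdm g\|_{L^p(\mathbb{D},\mu)}$.

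Combining these facts finishes the argument: if $\BPol$ has operator norm $C$ on $\Lpol$, then $\|\BPdm g\|_{L^p(\mathbb{D},\mu)}=\|\BPol\tilde g\|_{\Lpol}\le C\|\tilde g\|_{\Lpol}=C\|g\|_{L^p(\mathbb{D},\mu)}$, so $\BPdm$ is bounded with the same norm, which is the contrapositive of the claim.

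The one place requiring care, and which I expect to be the main obstacle, is justifying the interchange of integration that reduces the integral over $\Omega$ to an iterated integral over $\mathbb{D}$ and the slices. For fixed $z$ this is a Fubini argument, and rather than verify absolute integrability of $\BKol(z,\cdot)g(\cdot)\lambda(\cdot)$ for a general $L^p$ function, I would first establish the identity $\BPol\tilde g=\widetilde{\BPdm g}$ for $g$ in a convenient dense class (for instance polynomials, where the kernel series converges uniformly on compact sets exactly as in the proof of Lemma \ref{four}), and then extend it to all of $L^p(\mathbb{D},\mu)$ using the two isometries together with the assumed boundedness of $\BPol$.
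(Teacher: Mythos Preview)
Your proposal is correct and is essentially the paper's own argument: both use the lift $\tilde g(z_1,z_2)=g(z_1)$, observe it is an $L^p$-isometry via the definition of $\mu$, and invoke Lemma~\ref{four} to get $\BPol\tilde g=\widetilde{\BPdm g}$. The only cosmetic difference is that you frame it as the contrapositive, whereas the paper runs the same identities along a sequence $f_n\in L^p\cap L^2$ witnessing the unboundedness of $\BPdm$.
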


\begin{proof}
Since the operator is unbounded there exists a sequence of functions $\left\{f_n\right\}\subset L^p(\mathbb{D},\mu)\cap L^2(\mathbb{D},\mu)$ such that 
\begin{align}\label{ratio1}
\lim_{n\to\infty}\frac{||\BPdm f_n||_{\mathbb{D},\mu}}{||f_n||_{\mathbb{D},\mu}}=\infty.
\end{align}
The norms are $L^p$ norms but we drop $p$ to simplify the notation. We define a new sequence of functions $\left\{F_n\right\}\subset \Lpol$ by $F_n(z_1,z_2)=f_n(z_1)$. Then we get \begin{align}\label{norms1}
||F_n||_{\Omega,\lambda}=||f_n||_{\mathbb{D},\mu}.
\end{align}

On the other hand, by using \eqref{kernels} we notice that for any $n\in\mathbb{N}$

\begin{align}\label{norms2}
\BPol(F_n)(z_1,z_2)=\BPdm(f_n)(z_1),
\end{align}
and by using \eqref{norms1}
\begin{align}\label{norms3}
||\BPol(F_n)||_{\Omega,\lambda}=||\BPdm(f_n)||_{\mathbb{D},\mu}.
\end{align}
We finish the proof of the proposition by combining \eqref{ratio1}, \eqref{norms1} and \eqref{norms3}.

\end{proof}

\subsection{Analysis of $\BPdm$}
Next we study $L^p$ mapping properties of the operator $\BPdm$ and prove the following.

\begin{proposition}\label{two}
For $1<p<\infty$, $\mathbf{B}_{\mathbb{D}}^{\mu}$ is bounded on $L^p(\mathbb{D},\mu)$  if and only if $p=2$.
\end{proposition}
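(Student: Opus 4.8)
The plan is to bound the $L^p(\mathbb{D},\mu)$ operator norm of $\BPdm$ from below by an explicit ratio of moments of $\mu$, and to show that for the exponentially small weight $\mu$ this ratio blows up precisely when $p\neq 2$. The case $p=2$ is immediate, since $\BPdm$ is then an orthogonal projection; so the whole content is the unboundedness for $p\neq 2$. The first step exploits that $\mu$ is radial. Decomposing into Fourier modes and using that the holomorphic monomials are $z^m=r^me^{im\theta}$, I would compute the action of $\BPdm$ on a single mode $f(re^{i\theta})=\phi(r)e^{im\theta}$ exactly: for $m\geq 0$ one gets $\BPdm f=\frac{2\pi\int_0^1\phi(r)r^{m+1}\mu(r)\,dr}{M(2m)}\,z^m$, and for $m<0$ one gets $0$, where I write $M(x)=2\pi\int_0^1 r^{x+1}\mu(r)\,dr=\int_{\mathbb{D}}|z|^x\mu(z)\,dA(z)$ for the (real-order) moments of $\mu$.

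Next, for fixed $m$ I would optimize $\|\BPdm f\|_p/\|f\|_p$ over all radial profiles $\phi$. Since $\|z^m\|_p=M(pm)^{1/p}$ and the numerator is linear in $\phi$ while the denominator is the $L^p$-norm of $\phi$ against $d\nu=\mu(r)r\,dr$, Hölder duality identifies the supremum exactly and shows that the operator norm of $\BPdm$ on $L^p(\mathbb{D},\mu)$ is at least
\[
\frac{M(p'm)^{1/p'}\,M(pm)^{1/p}}{M(2m)}
\]
for every $m\in\mathbb{N}$, where $p'$ is the conjugate exponent. The extremal $\phi$ is the bounded function $r^{m(p'-1)}$, so the test functions lie in $L^2\cap L^p$ and the lower bound is legitimate.

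The crux is then the asymptotic size of $M(x)$ as $x\to\infty$. By Lemma \ref{four} and the definition of $\mu$ one may write $M(x)=\int_{\Omega}|z_1|^x\exp(1/\rho)\,dV$, and the factor $|z_1|^x$ concentrates the mass near the part of $\partial\Omega$ where $|z_1|=1$ — exactly where $\exp(1/\rho)$ is decaying. A Laplace/saddle-point analysis, balancing $x(1-|z_1|)$ against $1/|\rho|$, should yield $M(x)\approx P(x)\,e^{-c\sqrt{x}}$ for some $c>0$ and a subexponential factor $P$; equivalently $\Phi(x):=-\log M(x)$ is asymptotically a concave power $c\sqrt{x}$, in particular super-logarithmic. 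Inserting this into the moment ratio, its logarithm is
\[
\Phi(2m)-\tfrac1p\Phi(pm)-\tfrac1{p'}\Phi(p'm)\sim c\sqrt{m}\left(\sqrt2-\tfrac1{\sqrt p}-\tfrac1{\sqrt{p'}}\right).
\]
Because $a\mapsto\sqrt a+\sqrt{1-a}$ is strictly concave on $(0,1)$ with maximum $\sqrt2$ at $a=\tfrac12$, one has $p^{-1/2}+p'^{-1/2}<\sqrt2$ whenever $p\neq 2$, so the exponent is positive and the ratio tends to $\infty$; note this is just strict concavity of $\Phi$ (Jensen applied to the weights $\tfrac1p,\tfrac1{p'}$ with $\tfrac1p\cdot pm+\tfrac1{p'}\cdot p'm=2m$), so the exact exponent $\sqrt x$ is not essential. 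Hence $\BPdm$ is unbounded for $p\neq 2$.

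I expect the one genuinely technical point to be the asymptotic estimate for $M(x)$, for a general smooth bounded complete Reinhardt domain with general multi-radial defining function $\rho$. The boundary Laplace analysis must be carried out uniformly — controlling simultaneously the shrinking of the slices $S_{z_1}$ and the decay of $\exp(1/\rho)$ as $|z_1|\to 1$ — and one must verify that $\Phi=-\log M$ is concave and grows \emph{faster} than $\log x$. This last feature is exactly what separates the exponential weight from the polynomial-decay case, where $\Phi\sim\log x$ forces the analogous ratio to stay bounded (the $\log m$ terms cancel) and the projection is $L^p$-bounded for all $p$. Everything else in the argument is exact algebra and the elementary concavity inequality.
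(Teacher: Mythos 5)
Your first step is correct and attractive: the Fourier-mode computation of $\BPdm$ on $f=\phi(r)e^{im\theta}$, together with H\"older duality over radial profiles, does give the exact lower bound $M(p'm)^{1/p'}M(pm)^{1/p}/M(2m)$ for the operator norm, with a bounded extremizer, and it treats $p<2$ and $p>2$ symmetrically (the paper instead tests on $z^{km}\overline{z}^m$ for $p<2$ and dualizes). The gap is in the second half. The two-sided asymptotic $M(x)\approx P(x)e^{-c\sqrt{x}}$ is exactly the hard part, and you do not prove it: you say a Laplace/saddle-point analysis ``should yield'' it. For the unit ball with the explicit weight $\exp(1/r)$ such an estimate is known (it is the Dostani\'c estimate \eqref{kappa} the paper uses for the Sobolev theorem), but Proposition \ref{two} concerns the slice-integrated weight $\mu(z_1)=\int_{S_{z_1}}\exp(1/\rho)\,dA(z_2)$ on a \emph{general} smooth complete Reinhardt domain with an \emph{unknown} multi-radial defining function $\rho$; carrying out a uniform boundary Laplace analysis there is precisely what the paper is structured to avoid. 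The paper never computes asymptotics of $\Phi$ at all: it integrates by parts $n$ times, shows the resulting integrand $\delta_n$ is positive near $r_1=1$ (Lemma \ref{twist}), uses log-convexity of the truncated integral $\widetilde{\Phi}_n$ plus the explicit factor $1/((x+2)\cdots(x+n+1))$ to get the quantitative convexity bound $x^2\theta_n''(x)\gtrsim n$, and then lets $n\to\infty$. That device is the actual content of the proof, and your proposal replaces it with an unproved assertion.

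Moreover, your fallback remark --- that the conclusion ``is just strict concavity of $\Phi$ \dots so the exact exponent $\sqrt{x}$ is not essential'' --- is false as stated. Strict concavity of $\Phi=-\log M$ (which, incidentally, is automatic from H\"older, so nothing need be ``verified'' there) only gives that the Jensen defect
$D(m)=\Phi(2m)-\tfrac1p\Phi(pm)-\tfrac1{p'}\Phi(p'm)$
is positive for each $m$; it does not give $D(m)\to\infty$, and unboundedness of the operator needs divergence. Your own observation about the polynomial case makes the point: $\Phi\sim c\log x$ is strictly concave yet $D(m)$ stays bounded. What is really needed is a quantitative statement, e.g.\ the lemma that a concave, increasing, sublinear $\Phi$ with $\sup_m D(m)<\infty$ must satisfy $\Phi(x)=O(\log x)$ (provable by a telescoping estimate on $\Phi'(2m)-\Phi'(3m)\lesssim 1/m$), combined with a proof that $M(x)$ decays faster than every power of $x$ (an upper bound such as $M(x)\lesssim e^{-c\sqrt{x}}$, which does follow from $|\rho|\lesssim 1-|z_1|$ on the slices). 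With those two ingredients your argument closes, and it would arguably be cleaner than the paper's; but as written, neither ingredient is stated or proved, so the proposal does not yet constitute a proof.
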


The arguments are similar to the ones in \cite{ZeytuncuTran}; however, the weights in \cite{ZeytuncuTran} were explicit whereas the weight here is not. The weight on the unit disc here is the restriction of $\lambda$, which involves an unknown defining function in it.

\begin{proof} It is enough to prove the statement for $1<p<2$. We get the remaining part by using duality of $L^p$ spaces and self adjointness of $\BPdm$.

\subsubsection*{Moment Function} We define the following moment function (and its logarithm) for $x>0$,
\begin{align*}
\Phi(x)&=\int_{\mathbb{D}}|z|^x\mu(z)dA(z)\\
&=e^{\phi(x)}.
\end{align*}
After successive integration by parts $n$-times (due to exponential decay at the boundary we don't get any boundary terms), we rewrite it as
\begin{align*}
\Phi(x)&=\int_{\mathbb{D}}|z|^x\mu(z)dA(z)\\
&=\int_{\Omega}|z_1|^x\exprho dV(z_1,z_2)\\
& =4\pi^2 \int_{R} r_1^{x+1}r_2\exprho dr_1dr_2\\
&=\frac{4\pi^2}{(x+2)\cdots(x+n+1)}\int_{R}r_1^{x+n+1}(-1)^n\frac{\partial^n}{\partial r_1^n}\left(r_2\exprho\right)dr_1dr_2.\\
\end{align*}

We label the last integral and the part of the integrand as
\begin{align*}
\delta_n(r_1,r_2)&=(-1)^n\frac{\partial^n}{\partial r_1^n}\left(r_2\exprho\right)\\
\Phi_n(x)&=\int_{R}r_1^{x+n+1}(-1)^n\frac{\partial^n}{\partial r_1^n}\left(r_2\exprho\right)dr_1dr_2\\
&=\int_{R}r_1^{x+n+1}\delta_n(r_1,r_2)dr_1dr_2.
\end{align*}

A straightforward computation expresses $\delta_n$'s as follows,
\begin{align*}
\delta_1(r_1,r_2)&=\left[\rho_{r_1}\right]\frac{r_2}{\rho^2}\exprho
\\
\delta_2(r_1,r_2)&=\left[(\rho_{r_1})^2+\rho(2(\rho_{r_1})^2-\rho_{r_1r_1}\rho)\right]\frac{r_2}{\rho^4}\exprho\\
&\vdots\\
\delta_n(r_1,r_2)&=\left[(\rho_{r_1})^n+\rho(\cdots)\right]\frac{r_2}{\rho^{2n}}\exprho.
\end{align*}
Without loss of generality, $\rho_{r_1}$ can be assumed to be bounded away from zero near $r_1=1$ since at $r_1=1$ the tangent line is vertical. Moreover, $\rho$ is negative inside, zero on the boundary, and positive outside; therefore the nonzero value of $\rho_{r_1}$ has to be positive. In a small neighborhood of the boundary of $R$, where $\rho$ is sufficiently small, only the first term $(\rho_{r_1})^n$ in the square bracket matters since all the other terms contain a $\rho$ term in front.  That is, we obtain the following.

\begin{lemma}\label{twist}
For any integer $n$ there exists $a_n>0$ such that $\delta_n(r_1,r_2)>c_n>0$ for some $c_n$ when $a_n< r_1<1$ and for all $r_2>0$.
\end{lemma}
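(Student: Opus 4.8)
The plan is to read off the sign of $\delta_n$ from its square-bracket factor $B_n(r_1,r_2):=(\rho_{r_1})^n+\rho(\cdots)$, since for $r_2>0$ and $\rho\neq 0$ the remaining factor $\frac{r_2}{\rho^{2n}}\exprho$ is strictly positive (an even power $\rho^{2n}>0$ multiplying a positive exponential). Thus the whole content is to produce $a_n,c_n>0$ with $B_n>c_n$ whenever $a_n<r_1<1$; this bound is what I would establish, and together with the positivity of the remaining factor it forces $\delta_n>0$ on that region. Throughout I write the upper boundary of $R$ over $r_1$ as the graph $r_2=g(r_1)$, so that the slice over $r_1$ is $0<r_2<g(r_1)$.

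First I would make the structure of $B_n$ precise. Applying Fa\`a di Bruno to $\partial_{r_1}^n\exprho$ and pulling out the most singular power $\rho^{-2n}$, which comes solely from $\bigl(\partial_{r_1}(1/\rho)\bigr)^n=(-1)^n(\rho_{r_1})^n\rho^{-2n}$, one finds $B_n=(\rho_{r_1})^n+\rho\,Q_n$, where $Q_n$ is a universal polynomial in $\rho$ and the derivatives $\partial_{r_1}^j\rho$, $1\le j\le n$ (exactly as displayed for $n=1,2$ above, e.g.\ $Q_2=2(\rho_{r_1})^2-\rho_{r_1r_1}\rho$). Since $\rho$ is smooth on the compact set $\overline{R}$, the function $Q_n$ is bounded there, say $|Q_n|\le M_n$.

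Next I would invoke the geometry near $r_1=1$. Because $\partial R$ meets the line $r_1=1$ at the corner $(1,0)$ with a vertical tangent, the gradient there is horizontal, so $\rho_{r_2}(1,0)=0$ while $\rho_{r_1}(1,0)>0$; by continuity $\rho_{r_1}\ge 2\kappa>0$ on a neighborhood of $(1,0)$, whence $(\rho_{r_1})^n\ge(2\kappa)^n$. Moreover the slices pinch, $g(r_1)\to 0$ as $r_1\to 1^-$, so the set $\{(r_1,r_2)\in\overline{R}:a_n\le r_1<1\}$ contracts to the single point $(1,0)$ as $a_n\to 1$. As $\rho$ is continuous with $\rho(1,0)=0$, for $a_n$ close enough to $1$ this set lies in the neighborhood above and also satisfies $|\rho|\,M_n<(2\kappa)^n/2$. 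Combining, $B_n\ge(2\kappa)^n-|\rho|\,M_n>(2\kappa)^n/2=:c_n$ throughout $\{a_n<r_1<1\}$, which gives the claim and hence $\delta_n>0$ there.

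The main obstacle is the uniform control of the remainder $\rho\,Q_n$ over the \emph{entire} slice $0<r_2<g(r_1)$, not merely near $\partial R$: one needs $\rho$ itself to be uniformly small on the full (thin) slice, not just $\rho\to 0$ as one approaches the boundary. This is precisely what the pinching of the slices at $(1,0)$ provides, and it is the one place where the hypotheses on $\Omega$ (boundedness, smoothness, and the vertical tangent forcing $g(1)=0$) are genuinely used. Once this is in hand, the dominance of $(\rho_{r_1})^n$ over $\rho\,Q_n$, and therefore the positivity of $\delta_n$, follow from the elementary estimate above.
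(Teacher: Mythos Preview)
Your argument is correct and follows exactly the paper's approach: $\rho_{r_1}>0$ near $r_1=1$ because the tangent is vertical there, and on that region the leading term $(\rho_{r_1})^n$ dominates the remainder $\rho\,Q_n$ since $\rho$ is small. Your version is more careful than the paper's in making explicit why the \emph{entire} slice over $r_1$ lies near $(1,0)$ (via the pinching $g(r_1)\to 0$), and you rightly observe that the uniform lower bound $c_n$ really pertains to the bracket $B_n$ rather than to $\delta_n$ itself, which carries the factor $r_2$ and hence can only be claimed positive.
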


Next we define 
\begin{align*}
\widetilde{\Phi}_n(x)
&=\int_{R\cap \{a_n<r_1<1\}}r_1^{x+n+1}\delta_n(r_1,r_2)dr_1dr_2.
\end{align*}

Then we compare $\widetilde{\Phi}_n(x)$ and $\Phi_n(x)$.

\begin{align*}
\left|\frac{\Phi_n(x)}{\widetilde{\Phi}_n(x)}-1\right|&\leq\frac{\int_{R\cap \{r_1<a_n\}}r_1^{x+n+1}|\delta_n(r_1,r_2)|dr_1dr_2}{\int_{R\cap \{a_n<r_1<1\}}r_1^{x+n+1}\delta_n(r_1,r_2)dr_1dr_2}\\
&\lesssim\frac{\max_{R}|\delta_n|}{\int_{R\cap \{a_n<r_1<1\}}\delta_n(r_1,r_2)dr_1dr_2}~\frac{a_n^{x+n+2}}{a_n^{x+n+1}(x+n+2)}\\
&\leq C(n)\frac{1}{x+n+2}
\end{align*}

Hence, we conclude the following lemma. 
\begin{lemma}\label{tilde}
For any $n\geq1$, $$\lim_{x\to\infty}\frac{\Phi_n(x)}{\widetilde{\Phi}_n(x)}=1.$$
\end{lemma}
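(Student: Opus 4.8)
The plan is to turn the displayed string of inequalities immediately preceding the statement into a rigorous argument and then pass to the limit. Since $\widetilde{\Phi}_n$ is obtained from $\Phi_n$ by deleting the contribution of $R\cap\{r_1<a_n\}$, I would first record
$$\Phi_n(x)-\widetilde{\Phi}_n(x)=\int_{R\cap\{r_1<a_n\}}r_1^{x+n+1}\delta_n(r_1,r_2)\,dr_1dr_2,$$
so that everything reduces to showing that the relative error $|\Phi_n(x)-\widetilde{\Phi}_n(x)|/\widetilde{\Phi}_n(x)$ tends to $0$ as $x\to\infty$.

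For the numerator I would bound $|\delta_n|$ by $\max_R|\delta_n|$ (finite, since the factor $\exp(1/\rho)$ forces $\delta_n$ to vanish at the boundary of $R$ despite the $\rho^{-2n}$ singularity) and use the exact value $\int_0^{a_n}r_1^{x+n+1}\,dr_1=a_n^{x+n+2}/(x+n+2)$, the boundedness of $R$ turning the $r_2$-integration into a harmless constant. The denominator is where Lemma \ref{twist} enters: on $R\cap\{a_n<r_1<1\}$ the integrand $\delta_n$ is strictly positive, so $\widetilde{\Phi}_n(x)>0$, and the crude bound $r_1^{x+n+1}\ge a_n^{x+n+1}$ valid on this region gives
$$\widetilde{\Phi}_n(x)\ge a_n^{x+n+1}\int_{R\cap\{a_n<r_1<1\}}\delta_n(r_1,r_2)\,dr_1dr_2,$$
where the remaining integral is a fixed positive constant independent of $x$.

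Dividing the two estimates, the powers $a_n^{x}$ very nearly cancel: the numerator carries $a_n^{x+n+2}$ and the denominator $a_n^{x+n+1}$, so their quotient is a constant multiple of $a_n/(x+n+2)$, which is exactly the bound $C(n)(x+n+2)^{-1}$ recorded above the lemma. Letting $x\to\infty$ then sends the relative error to $0$ and hence $\Phi_n(x)/\widetilde{\Phi}_n(x)\to1$. I expect the only genuine subtlety to be the denominator: both $\Phi_n(x)$ and $\widetilde{\Phi}_n(x)$ separately decay to $0$ as $x\to\infty$ (because $r_1<1$ throughout $R$), so the claim is about their ratio and not about either integral in isolation. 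What makes the ratio behave is that the discarded slab is weighted by the small power $a_n^{x}$ of a number strictly less than $1$, and the positivity of $\delta_n$ near the boundary supplied by Lemma \ref{twist} is precisely what lets me replace $\int_{R\cap\{a_n<r_1<1\}}\delta_n$ by a genuine positive lower bound rather than merely an absolute value; this is the one place where the sign of $\rho_{r_1}$ and the geometry of $R$ are used.
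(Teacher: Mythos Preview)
Your proposal is correct and follows exactly the paper's own argument: you bound the numerator by $\max_R|\delta_n|\cdot a_n^{x+n+2}/(x+n+2)$ and the denominator from below by $a_n^{x+n+1}\int_{R\cap\{a_n<r_1<1\}}\delta_n$, then cancel the powers of $a_n$ to obtain the $C(n)/(x+n+2)$ decay. The additional commentary on why $\max_R|\delta_n|<\infty$ and why positivity of $\delta_n$ (Lemma~\ref{twist}) is essential for the lower bound makes explicit points the paper leaves implicit, but the route is identical.
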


We define two more functions $$\Theta_n(x)=\frac{1}{(x+2)\cdots(x+n+1)}\widetilde{\Phi}_n(x)$$
and $$\theta_n(x)=\log\Theta_n(x).$$

By Lemma \ref{tilde} for any $n\geq1$, 
\begin{equation}\label{phi-theta}
\lim_{x\to\infty}\frac{\Theta_n(x)}{\Phi(x)}=1
\end{equation}

\subsubsection*{Test Functions} 
We fix once for all a positive integer $k$ so large that $p(k+1)<2(k-1)$. This is the step where we use the assumption  $p<2$.
Then we consider the action of $\BPdm$ on a sequence of monomials $\left\{z^{km}\overline{z}^m\right\}_{m=0}^{\infty}$. By using the radial symmetry, we obtain 
\begin{align*}
\BPdm\left(z^{km}\overline{z}^m\right)(z)=\frac{\Phi(2km)}{\Phi(2(k-1)m)}z^{(k-1)m}.
\end{align*}
In order to prove unboundedness, we consider the ratio
\begin{align*}
\frac{||\BPdm\left(z^{km}\overline{z}^m\right)||_{\mathbb{D},\mu}^p}{||z^{km}\overline{z}^m||_{\mathbb{D},\mu}^p}&=\left(\frac{\Phi(2km)}{\Phi(2(k-1)m)}\right)^p\frac{\Phi(p(k-1)m)}{\Phi(p(k+1)m)}\\
&=\exp\left(p\phi(2km)-p\phi(2(k-1)m)-\left(\phi(p(k+1)m)-\phi(p(k-1)m)\right)\right).
\end{align*}
Our goal is to show that this expression gets as large as we want by choosing appropriate $m$. \\

In order to achieve this, we first look at another expression. We set
$$R_n(m)=p\theta_n(2km)-p\theta_n(2(k-1)m)-\left(\theta_n(p(k+1)m)-\theta_n(p(k-1)m)\right).$$


We note that the functions $\Phi(x),\phi(x),\Theta_n(x)$ and $\theta_n(x)$ are smooth functions of $x$. We use the mean value theorem twice to rewrite $R_n(m)$ as follows
\begin{align*}
R_n(m)&=2pm\theta_n'(m_1)-2pm\theta_n'(m_2)
\end{align*}
for some  $m_1\in(2(k-1)m, 2km)$ and $m_2\in (p(k-1)m,p(k+1)m)$.
Then
\begin{align*}
R_n(m)=2pm(m_1-m_2)\theta''_n(m_3) 
\end{align*}
for some $m_3\in(m_2, m_1)$. Each of $m_1$, $m_2$ and $m_3$ is comparable to $m$ up to certain constants that depend on $p$. Therefore,
\begin{align}\label{secondderivative}
R_n(m)\geq c m_3^2\theta_n''(m_3)  \hskip 0.5cm \text {for some constant } c>0.
\end{align}

\subsubsection*{Second Derivative}
It remains to understand $\theta_n''(m)$ as $m$ goes to infinity. 
Recall the integration by parts argument at the beginning of the proof. When we take logarithm and two derivatives we get
\begin{align*}
\theta_n(x)&=\log\Theta_n(x)\\
\theta_n(x)&=-\log(x+2)-\cdots-\log(x+n+1) +\log\widetilde{\Phi}_n(x)\\
\theta_n''(x)&=\frac{1}{(x+2)^2}+\cdots+\frac{1}{(x+n+1)^2}+\left(\log\widetilde{\Phi}_n(x)\right)''\\
\theta_n''(x)&\geq \frac{n}{(x+n+1)^2}+\left(\log\widetilde{\Phi}_n(x)\right)''\\
x^2\theta_n''(x)&\geq x^2\frac{n}{(x+n+1)^2}+x^2\left(\log\widetilde{\Phi}_n(x)\right)''~.\\
\end{align*}
By H\"older's inequality, (since $\delta_n$'s are positive functions where we take integrals in $\widetilde{\Phi_n}$) $$\widetilde{\Phi}_n(tu+(1-t)v)\leq \widetilde{\Phi}_n(u)^t\widetilde{\Phi}_n(v)^{1-t}.$$
In other words, $\widetilde{\Phi}_n(x)$ is log-convex\footnote{Since $\delta_n$'s are not necessarily positive on the whole $R$ we can not claim $\Phi_n(x)$ is log-convex.} and $\left(\log\widetilde{\Phi}_n(x)\right)''\geq 0$. Hence we have the simpler inequality that for any $n\geq 1$, 
$$x^2\theta_n''(x)\geq x^2\frac{n}{(x+n+1)^2}.$$
We combine this with \eqref{secondderivative} to conclude that for any $n\geq 1$, 
$$R_n(m)\geq cm_3^2\frac{n}{(m_3+n+1)^2}.$$
Recall $m_3$ is comparable to $m$; hence for any $n$, there exists $M(n)$ such that for all $m>M(n)$, 
$$R_n(m)\geq c' n$$
for some constant $c'>0$. When we take the exponential of both sides and recall the definition of $R_n(m)$, we get 
\begin{align*}
\left(\frac{\Theta_n(2km)}{\Theta_n(2(k-1)m)}\right)^p\frac{\Theta_n(p(k-1)m)}{\Theta_n(p(k+1)m)}=\exp\left(R_n(m)\right)\geq \exp(c' n). 
\end{align*}
By using \eqref{phi-theta} and choosing $m$ sufficiently large, we get 

\begin{align*}
\left(\frac{\Phi(2km)}{\Phi(2(k-1)m)}\right)^p\frac{\Phi(p(k-1)m)}{\Phi(p(k+1)m)}\geq c'' \left(\frac{\Theta_n(2km)}{\Theta_n(2(k-1)m)}\right)^p\frac{\Theta_n(p(k-1)m)}{\Theta_n(p(k+1)m)}
\geq c''\exp(c'\frac{n}{2}) 
\end{align*}
for some additional constant $c''>0$.\\

This means that there exist positive constants $c'$ and $c''$ such that for any $n$, there exists $M'(n)$ so that for all $m>M'(n)$
\begin{align*}
\left(\frac{\Phi(2km)}{\Phi(2(k-1)m)}\right)^p\frac{\Phi(p(k-1)m)}{\Phi(p(k+1)m)}
\geq c''\exp(c'\frac{n}{2}) 
\end{align*}
Finally we let $n$ go to infinity. This gives us the desired blowup and finishes the proof of Proposition \ref{two}.
\end{proof}

$\BPol$ is bounded on $\Ltol$ by definition. When we combine Propositions \ref{one} and \ref{two} we complete the proof of Theorem \ref{main}.




\section{Proof of Theorem \ref{Sobolev}}

Recall that $r(z_1,\cdots,z_n)=|z_1|^2+\cdots+|z_n|^2-1$ is the standard defining function for $\mathbb{B}^n$ and $\mu(z)=\exp\left(\frac{1}{r}\right)$. 
For $k\in\mathbb{N}$, recall $W^k(\mathbb{B}^n,\mu)$ denotes the weighted $L^2$-Sobolev space with the norm 
$$||f||_{k,\mu}^2=\sum_{|\beta+\gamma|\leq k}\int_{\mathbb{B}^n}\left|\frac{\partial^{\beta+\gamma}}{\partial \overline{z}^{\beta}\partial z^{\gamma}}f(z)\right|^2\mu(z)dV(z).$$

For a multi-index $\gamma$, let $d_{\gamma}$ denote the  $L^2$-norm of the monomial $z^{\gamma}$, namely $$d_{\gamma}^2=\int_{\mathbb{B}^n}\left|z^{\gamma}\right|^2\mu(z)dV(z).$$  For the proof of Theorem \ref{Sobolev}, we follow the argument in \cite[Section 4]{Boas84}. We start with the following lemmas. The first one is an asymptotic estimate that replaces a Brunn-Minkowski type inequality.

\begin{lemma}\label{ds} 

There exists a constant $\kappa>0$ such that
\begin{align}\label{dse}
\frac{1}{\kappa}\frac{e^{-2\sqrt{|\gamma|+1}}}{(|\gamma|+1)^{1/3}}\frac{\gamma_1!\cdots\gamma_n!}{(|\gamma|+1)!} \leq d_{\gamma}^2 \leq \kappa \frac{e^{-2\sqrt{|\gamma|+1}}}{(|\gamma|+1)^{1/3}}\frac{\gamma_1!\cdots\gamma_n!}{(|\gamma|+1)!}
\end{align}
for all multi-indices $\gamma$.
\end{lemma}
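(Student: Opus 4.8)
The plan is to reduce the $2n$-real-dimensional integral defining $d_\gamma^2$ to a single one-dimensional moment integral, and then extract its size by a saddle-point (Laplace) analysis. First I would pass to polar coordinates $z_j=\rho_j e^{i\theta_j}$, integrate out the angles, and substitute $s_j=\rho_j^2=|z_j|^2$; this turns $d_\gamma^2$ into $\pi^n\int_\Delta \prod_j s_j^{\gamma_j}\exp\!\big(\tfrac{1}{\,s_1+\cdots+s_n-1\,}\big)\,ds$ over the simplex $\Delta=\{s_j\ge 0,\ \sum_j s_j<1\}$. Since the weight depends on $s$ only through $t:=\sum_j s_j$, I would slice $\Delta$ by the hyperplanes $\{\sum s_j=t\}$. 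The classical Dirichlet integral $\int_{\sum s_j\le t}\prod s_j^{\gamma_j}\,ds=t^{|\gamma|+n}\tfrac{\gamma_1!\cdots\gamma_n!}{(|\gamma|+n)!}$, differentiated in $t$, gives the slice mass $t^{|\gamma|+n-1}\tfrac{\gamma_1!\cdots\gamma_n!}{(|\gamma|+n-1)!}$, so that
\begin{equation*}
d_\gamma^2=\pi^n\,\frac{\gamma_1!\cdots\gamma_n!}{(|\gamma|+n-1)!}\int_0^1 \exp\!\Big(\frac{1}{t-1}\Big)\,t^{|\gamma|+n-1}\,dt .
\end{equation*}
This isolates the factorial factor exactly; everything else is carried by the one-dimensional integral, with $M:=|\gamma|+n-1$ the large parameter.

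Next I would analyze $I_M:=\int_0^1 e^{1/(t-1)}t^M\,dt$. The substitution $u=1-t$ gives $I_M=\int_0^1\exp\big(-\tfrac1u+M\ln(1-u)\big)\,du$, whose exponent $\psi(u)=-\tfrac1u+M\ln(1-u)$ has a unique interior maximum determined by $Mu^2+u-1=0$, i.e. $u^\ast=\tfrac{-1+\sqrt{1+4M}}{2M}\sim M^{-1/2}$. Geometrically, the mass of $|z^\gamma|^2$ against the weight concentrates at distance $\sim|\gamma|^{-1/2}$ from $\partial\mathbb{B}^n$, where the growth of the monomial balances the exponential decay of $\mu$. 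A Taylor expansion at $u^\ast$ gives $\psi(u^\ast)=-2\sqrt M+O(1)$ and $\psi''(u^\ast)\sim -2M^{3/2}$, so Laplace's method yields $I_M\approx e^{-2\sqrt M}$ times an algebraic factor in $M$ coming from the width $\sim M^{-3/4}$ of the peak. Since $\sqrt{|\gamma|+n-1}-\sqrt{|\gamma|+1}\to 0$, the factor $e^{-2\sqrt M}$ is comparable to the stated $e^{-2\sqrt{|\gamma|+1}}$ up to a bounded factor; Stirling's formula then lets me trade $(|\gamma|+n-1)!$ for $(|\gamma|+1)!$ at the cost of absorbing the remaining polynomial-in-$|\gamma|$ factors, assembling the expression in the normalization of the statement.

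The heart of the argument — and the main obstacle — is promoting the formal Laplace asymptotic to a genuine two-sided estimate that is uniform in $\gamma$, so that a single constant $\kappa$ serves all multi-indices. For the upper bound I would split $I_M$ into a neighborhood $|u-u^\ast|\lesssim M^{-3/4}$ of the saddle, where a second-order Taylor estimate of $\psi$ dominates the integrand by a Gaussian, and the complementary tails, where $\psi$ falls below its maximum by an amount growing in $M$ and hence contributes negligibly. For the lower bound I would restrict the integral to a fixed fraction of the saddle neighborhood on which $\psi(u)\ge\psi(u^\ast)-C$, producing a lower bound of the same order. Keeping every error constant independent of $\gamma$ throughout both the saddle and the tail estimates is the delicate point; once that uniformity is secured, combining it with the exact Dirichlet factor and the Stirling simplification yields the claimed inequalities.
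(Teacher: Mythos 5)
Your reduction to a one-dimensional integral is correct, and it is actually cleaner than the paper's: the paper (working only in $n=2$, "the general case follows similarly") passes to polar coordinates in the radial image, $r_1=R\cos\theta$, $r_2=R\sin\theta$, and evaluates the angular Beta integral, whereas you slice the simplex and use the Dirichlet formula; both arrive at the same exact identity
\begin{equation*}
d_\gamma^2=\pi^n\,\frac{\gamma_1!\cdots\gamma_n!}{(|\gamma|+n-1)!}\int_0^1 t^{|\gamma|+n-1}\exp\Bigl(\frac{-1}{1-t}\Bigr)dt,
\end{equation*}
and your derivation handles all $n$ at once. Moreover, where the paper simply cites Dostani\'c's one-dimensional asymptotic \eqref{kappa}, you prove the asymptotic yourself by Laplace's method; your saddle-point data ($u^\ast\sim M^{-1/2}$, $\psi(u^\ast)=-2\sqrt{M}+O(1)$, $\psi''(u^\ast)\sim-2M^{3/2}$, width $\sim M^{-3/4}$) are all correct, and the uniformity issues you flag in the last paragraph are routine to handle.

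The genuine problem is your final assembly step. Your (correct) Laplace analysis gives $I_M\approx M^{-3/4}e^{-2\sqrt{M}}$ --- equivalently, $\int_0^\infty e^{-Mu-1/u}du=\tfrac{2}{\sqrt{M}}K_1(2\sqrt{M})\sim\sqrt{\pi}\,M^{-3/4}e^{-2\sqrt{M}}$, and $I_M$ is comparable to this --- and no application of Stirling's formula can convert that into the statement as printed: the mismatch between $(|\gamma|+1)^{-3/4}$ and the stated $(|\gamma|+1)^{-1/3}$ is a factor $(|\gamma|+1)^{5/12}$, and for $n\geq 3$ trading $(|\gamma|+n-1)!$ for $(|\gamma|+1)!$ costs a further factor $\approx(|\gamma|+1)^{n-2}$; unbounded polynomial factors cannot be ``absorbed'' into a single constant $\kappa$, which is exactly what your last sentence claims to do. What your computation actually proves is \eqref{dse} with $(|\gamma|+1)^{3/4}$ (for $n=2$), respectively $(|\gamma|+1)^{n-5/4}$ (general $n$, keeping the $(|\gamma|+1)!$ normalization), in place of $(|\gamma|+1)^{1/3}$. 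The discrepancy is not your error: the paper's quoted estimate \eqref{kappa} misstates the power of $x$ (it should be $x^{-3/4}$, as both your saddle-point computation and the Bessel identity show), and Lemma \ref{ds} inherits this. The slip is harmless downstream, since in Lemma \ref{coeff} only ratios $d_\alpha d_{\alpha+2\beta}/d_{\alpha+\beta}^2$ with a fixed index shift $\beta$ occur, and there any fixed power of $(|\gamma|+1)$ contributes a bounded factor --- only the exponential term and the factorials matter. But as a proof of the lemma as literally stated, your write-up either must derive the corrected exponent and note that the printed $1/3$ cannot hold, or it fails precisely at the point where you claim the leftover polynomial factors can be absorbed into $\kappa$.
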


By using the estimate above, we obtain the following inequality between $L^2$-norms of monomials, which can be thought of as a reverse Cauchy-Schwarz inequality.

\begin{lemma}\label{coeff}
For a given multi-index $\beta$ there exists a constant $K_{\beta}$ such that 
\begin{align}\label{key}
d_{\alpha}d_{\alpha+2\beta}\leq K_{\beta}(d_{\alpha+\beta})^2
\end{align}
for all multi-indices $\alpha$.
\end{lemma}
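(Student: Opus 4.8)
The plan is to pass from the norms $d_\gamma$ to the explicit model quantity furnished by Lemma \ref{ds}. Set
\[
A(\gamma)=\frac{e^{-2\sqrt{|\gamma|+1}}}{(|\gamma|+1)^{1/3}}\,\frac{\gamma_1!\cdots\gamma_n!}{(|\gamma|+1)!},
\]
so that \eqref{dse} reads $\tfrac1\kappa A(\gamma)\le d_\gamma^2\le\kappa A(\gamma)$. Applying the upper estimate to $d_\alpha^2$ and $d_{\alpha+2\beta}^2$ and the lower estimate to $d_{\alpha+\beta}^2$ gives
\[
\frac{d_\alpha^2\,d_{\alpha+2\beta}^2}{(d_{\alpha+\beta}^2)^2}\le \kappa^4\,\frac{A(\alpha)\,A(\alpha+2\beta)}{A(\alpha+\beta)^2},
\]
so it suffices to bound the ratio of $A$'s by a constant $C_\beta$ depending only on $\beta$; then \eqref{key} holds with $K_\beta=\kappa^2\sqrt{C_\beta}$. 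Writing $a=|\alpha|$ and $b=|\beta|$, I would factor $A$ into its exponential part, its power part $(|\gamma|+1)^{-1/3}$, and its factorial quotient, and estimate the three resulting ratios one at a time.

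The factorial quotient carries the main content, and it splits cleanly into a per-coordinate piece and a global denominator. A telescoping computation gives, for each $j$,
\[
\frac{\alpha_j!\,(\alpha_j+2\beta_j)!}{\bigl((\alpha_j+\beta_j)!\bigr)^2}=\prod_{i=1}^{\beta_j}\frac{\alpha_j+\beta_j+i}{\alpha_j+i}\le (1+\beta_j)^{\beta_j},
\]
since each of the $\beta_j$ factors is at most $1+\beta_j$; multiplying over $j$ bounds the per-coordinate contribution by $\prod_j(1+\beta_j)^{\beta_j}$, which depends only on $\beta$. The global denominators contribute
\[
\frac{\bigl((a+b+1)!\bigr)^2}{(a+1)!\,(a+2b+1)!}\le 1,
\]
which is precisely the log-convexity of $k\mapsto\log k!$ (Jensen at the midpoint $a+b+1$ of $a+1$ and $a+2b+1$). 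This is the elementary inequality that, in the unweighted setting of \cite{Boas84}, played the role of a Brunn--Minkowski type estimate.

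It then remains to check that the exponential and power corrections stay bounded, and this is the step where I expect the only genuine subtlety. The power ratio equals $\bigl((a+b+1)^2/((a+1)(a+2b+1))\bigr)^{1/3}$, and the identity $(a+b+1)^2-(a+1)(a+2b+1)=b^2$ bounds it by $(1+b^2)^{1/3}$. For the exponential ratio the exponent is $2\bigl(2\sqrt{a+b+1}-\sqrt{a+1}-\sqrt{a+2b+1}\bigr)$; naively each square root is of size $\sqrt a$, so a priori this factor could grow like $e^{c\sqrt a}$ and destroy the bound. The point is that the bracket is minus the second difference of $t\mapsto\sqrt{t+1}$ with step $b$, hence equals $b^2$ times a value of its second derivative $-\tfrac14(t+1)^{-3/2}$; concavity forces it to be nonnegative and at most $\tfrac{b^2}{4}$, so the exponential ratio is at most $e^{b^2/2}$. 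Collecting the three bounds yields $C_\beta=e^{b^2/2}(1+b^2)^{1/3}\prod_j(1+\beta_j)^{\beta_j}$ and hence the lemma.
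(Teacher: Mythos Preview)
Your proof is correct and follows essentially the same approach as the paper: invoke Lemma~\ref{ds} to replace $d_\gamma^2$ by the explicit model $A(\gamma)$, factor the resulting ratio into the exponential piece, the power piece, the per-coordinate factorials, and the global factorial, and bound each separately. The only differences are in the bookkeeping: the paper handles the two factorial ratios by an appeal to Stirling's formula, whereas you give direct elementary bounds (telescoping for the per-coordinate piece, log-convexity of $k\mapsto\log k!$ for the global piece); and for the exponential the paper simply drops one nonnegative square-root difference to get the cruder bound $2|\beta|$, while you use the second-difference/concavity argument to obtain $b^2/2$. Either way the structure and the key input are the same.
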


The next two lemmas are generalizations of similar arguments in \cite{Boas84} to the weighted setting.

\begin{lemma}\label{operator}
For a given multi-index $\beta$ there exists a bounded operator  $$M_{\beta}:L^2_a(\mathbb{B}^n,\mu) \to L^2_a(\mathbb{B}^n,\mu)$$ such that 
$$ \left<h,\frac{\partial^{\beta}}{\partial z^{\beta}}g\right>_{\mu}= \left<\frac{\partial^{\beta}}{\partial z^{\beta}}M_{\beta}h, g\right>_{\mu}$$
for all holomorphic polynomials $h$ and $g\in L^2_a(\mathbb{B}^n,\mu)$.
\end{lemma}

\begin{lemma}\label{parts}
For a given multi-index $\beta$ there exists a constant $K_{\beta}$ such that 
$$\left|\left< \frac{\partial^{\beta}}{\partial z^{\beta}}h,f \right>_{\mu}\right|\leq K_{\beta}||h||_{0,\mu}||f||_{|\beta|,\mu}$$
for all $f\in W^{|\beta|}(\mathbb{B}^n,\mu)$ and all holomorphic polynomials $h$.

\end{lemma}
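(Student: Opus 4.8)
The plan is to integrate the derivatives off the holomorphic factor $h$ and then to absorb the derivatives that land on the weight by a Hardy‑type inequality adapted to $\mu=\exp(1/r)$. The computation that drives everything is that, since $\partial r/\partial z_j=\bar z_j$, one has $\partial\mu/\partial z_j=-\bar z_j(-r)^{-2}\mu$, and inductively $\partial^{\sigma}\mu/\partial z^{\sigma}=P_{\sigma}\mu$, where $P_{\sigma}$ is a rational function whose most singular part near $\partial\mathbb{B}^n$ is a constant multiple of $\bar z^{\sigma}(-r)^{-2|\sigma|}$; in particular $|P_{\sigma}|\lesssim(-r)^{-2|\sigma|}$ on $\mathbb{B}^n$. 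Because $\mu$ and all of its derivatives vanish to infinite order on the boundary, repeated integration by parts produces no boundary terms, so for $f$ smooth up to the boundary (a dense class in $W^{|\beta|}(\mathbb{B}^n,\mu)$) I would write
$$\left<\frac{\partial^{\beta}}{\partial z^{\beta}}h,f\right>_{\mu}=\int_{\mathbb{B}^n}\frac{\partial^{\beta}h}{\partial z^{\beta}}\,\overline{f}\,\mu\,dV=(-1)^{|\beta|}\int_{\mathbb{B}^n}h\,\frac{\partial^{\beta}}{\partial z^{\beta}}\!\left(\overline{f}\,\mu\right)dV.$$

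Expanding by the Leibniz rule and using $\frac{\partial^{\beta'}}{\partial z^{\beta'}}\overline f=\overline{\frac{\partial^{\beta'}}{\partial\bar z^{\beta'}}f}$,
$$\frac{\partial^{\beta}}{\partial z^{\beta}}\!\left(\overline f\,\mu\right)=\sum_{\beta'\le\beta}\binom{\beta}{\beta'}\,\overline{\frac{\partial^{\beta'} f}{\partial\bar z^{\beta'}}}\;P_{\beta-\beta'}\,\mu.$$
The top term $\beta'=\beta$ contributes $(-1)^{|\beta|}\big\langle h,\frac{\partial^{\beta}}{\partial\bar z^{\beta}}f\big\rangle_{\mu}$, which Cauchy--Schwarz bounds by $\|h\|_{0,\mu}\|f\|_{|\beta|,\mu}$. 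For each remaining term $\beta'<\beta$, Cauchy--Schwarz together with $|P_{\beta-\beta'}|\lesssim(-r)^{-2|\beta-\beta'|}$ gives
$$\left|\int_{\mathbb{B}^n}h\,\overline{\frac{\partial^{\beta'} f}{\partial\bar z^{\beta'}}}\,P_{\beta-\beta'}\,\mu\,dV\right|\lesssim\|h\|_{0,\mu}\left(\int_{\mathbb{B}^n}\left|\frac{\partial^{\beta'} f}{\partial\bar z^{\beta'}}\right|^{2}(-r)^{-4|\beta-\beta'|}\mu\,dV\right)^{1/2}.$$

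The crux is therefore the weighted Hardy inequality
$$\int_{\mathbb{B}^n}|G|^{2}(-r)^{-4d}\,\mu\,dV\lesssim\|G\|_{d,\mu}^{2}\qquad(d\ge 0),$$
which says that each factor $(-r)^{-2}$ of the singular weight can be paid for by one derivative. I would prove it by integration by parts, using $\partial\mu/\partial\bar z_j=-z_j(-r)^{-2}\mu$ to rewrite the singular weight: moving the resulting $\partial/\partial\bar z_j$ onto $|G|^{2}$ lowers the power of $(-r)^{-1}$ while producing a cross term dominated by $\int_{\mathbb{B}^n}|G|\,|\nabla G|\,(-r)^{-2}\mu\,dV$, which is reabsorbed by Young's inequality, the intermediate powers of $(-r)^{-1}$ being handled by interpolation between consecutive exponents.

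Applying this with $G=\frac{\partial^{\beta'}}{\partial\bar z^{\beta'}}f$ and $d=|\beta-\beta'|$, and noting that every derivative occurring in $\|G\|_{d,\mu}$ has order at most $|\beta'|+d=|\beta|$, I would bound each error term by $\|h\|_{0,\mu}\|f\|_{|\beta|,\mu}$; summing the finitely many terms and passing from smooth $f$ to $f\in W^{|\beta|}(\mathbb{B}^n,\mu)$ by density then yields the stated estimate. The point to stress is that, unlike the purely holomorphic pairings addressed by Lemma \ref{operator}, here $f$ need not be holomorphic, so the integration by parts necessarily throws derivatives onto the weight, and controlling those is exactly what the Hardy estimate does. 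I expect that inequality to be the main obstacle: it is the one place where the precise weight $\mu=\exp(1/r)$ and the geometry of the ball are genuinely used, and it is the reason the argument does not extend verbatim to general exponentially decaying weights on general Reinhardt domains.
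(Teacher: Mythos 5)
Your argument is correct, but it takes a genuinely different route from the paper. The paper never differentiates the weight at all: using the Cauchy--Riemann equations it writes $\frac{\partial}{\partial z_j}h=\mathcal{L}_jh$ for a \emph{tangential} field $\mathcal{L}_j$ (this is where holomorphy of $h$ is used), and since any tangential field $T$ satisfies $T(r)=0$ and hence $T(\mu)=0$, integration by parts against $\mathcal{L}_j$ produces no boundary terms and no singular weight-derivative terms --- only a first-order operator with $C^{\infty}(\overline{\mathbb{B}^n})$ coefficients applied to $f$, after which Cauchy--Schwarz finishes. That argument works verbatim for any smooth complete Reinhardt domain and any weight that is a function of the defining function, which is exactly the generality the paper claims for this lemma. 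You instead integrate by parts with $\frac{\partial}{\partial z_j}$ head-on, accept the singular Leibniz terms $|P_{\sigma}|\lesssim(-r)^{-2|\sigma|}$, and pay for them with the weighted Hardy inequality $\int_{\mathbb{B}^n}|G|^2(-r)^{-4d}\mu\,dV\lesssim\|G\|_{d,\mu}^2$. That inequality is true for this particular $\mu$, and your sketch of its proof is the right mechanism: since $\partial_{\delta}\mu=\delta^{-2}\mu$ with $\delta=-r$, one integration by parts plus Young's inequality converts $|G|^2\delta^{-4d}$ into $|\nabla G|^2\delta^{-4(d-1)}$ (the terms where the derivative hits the polynomial factors are of lower order in $\delta^{-1}$ and absorb), so induction on $d$ closes. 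Note the bookkeeping is exactly at threshold: Cauchy--Schwarz squares $(-r)^{-2|\sigma|}$ into $(-r)^{-4|\sigma|}$, and $d$ derivatives can buy precisely $(-r)^{-4d}$, no more --- so your approach leans on the specific structure $\mu'/\mu=\delta^{-2}$ and would not survive a weight with a different decay profile, whereas the paper's tangential-field argument is insensitive to the decay rate. Two small points to fix if you write this up: your gloss ``each factor $(-r)^{-2}$ can be paid for by one derivative'' contradicts the inequality you actually state and use ($(-r)^{-4}$ per derivative --- the displayed inequality is the correct one); and the density of functions smooth up to the boundary in $W^{|\beta|}(\mathbb{B}^n,\mu)$, which you invoke to justify the boundary-term-free integration by parts, deserves a proof (on the ball it follows from dilations $f_t(z)=f(tz)$ together with the radial monotonicity of $\mu$, but it is not automatic for general weighted Sobolev spaces).
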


\subsection{Proof of Theorem \ref{Sobolev}} 
For $j\in\mathbb{N}$, let $S_j$ denote the truncation operator on $L^2_a(\mathbb{B}^n,\mu)$; i.e. for $f(z)=\sum_{\alpha}f_{\alpha}z^{\alpha}$
$$S_jf(z)=\sum_{|\alpha|\leq j}f_{\alpha}z^{\alpha}.$$
Note that $S_j$ is a bounded operator with operator norm 1. Also as $j\to\infty$, $S_jf$ converges to $f$ in norm.

For a given multi-index $|\beta|\leq k$ and $f\in W^k(\mathbb{B}^n,\mu)$ we want to show that 
\begin{align}\label{derivative}
\left\|\frac{\partial^{\beta}}{\partial z^{\beta}}\mathbf{B}_{\mu}f\right\|_{\mu}^2\lesssim ||f||_{k,\mu}^2
\end{align}
where the constant is independent of  $f$. This follows easily when we prove that
\begin{align*}
\left\|S_j\frac{\partial^{\beta}}{\partial z^{\beta}}\mathbf{B}_{\mu}f\right\|_{\mu}^2\lesssim ||f||_{k,\mu}^2
\end{align*}
where the constant is independent of  $j$ and $f$.

Let $h\in L^2_a(\mathbb{B}^n,\mu)$ and $j\in\mathbb{N}$, by using the lemmas above we obtain the following estimate

\begin{align*}
\left| \left<h, S_j \frac{\partial^{\beta}}{\partial z^{\beta}}\mathbf{B}_{\mu}f
 \right>_{\mu}\right|&=\left| \left<S_jh, \frac{\partial^{\beta}}{\partial z^{\beta}}\mathbf{B}_{\mu}f
 \right>_{\mu}\right| ~ \text{by orthogonality of monomials}\\
&= \left| \left<  \frac{\partial^{\beta}}{\partial z^{\beta}}M_{\beta}S_jh,f
 \right>_{\mu}\right| ~ \text{ by Lemma \ref{operator}}\\
&\lesssim ||M_{\beta}S_jh||_{0,\mu}||f||_{k,\mu}~\text{ by Lemma \ref{parts}}\\
&\lesssim ||h||_{0,\mu}||f||_{k,\mu}
\end{align*}
where the constant is independent of $j$ and $f$. This estimate, with the duality of $L^2$ spaces, proves \eqref{derivative} and we conclude the proof of Theorem \ref{Sobolev} modulo proofs of the lemmas.

\subsubsection{Proof of Lemma \ref{parts}} This is essentially a repetition and combination of \cite[Lemma 6.1]{Boas84},  \cite[Lemma 2.1]{Straube86} and \cite[Lemma 3]{ZeytuncuSobolev}. We present the argument here for completeness.

If $f$ is supported on a compact subset of $\mathbb{B}^n$ then the estimate follows trivially. Hence we assume $f$ is supported in a neighborhood of the boundary. We choose this neighborhood around the boundary such that there exist smoothly varying orthonormal holomorphic vector fields $L_1,\cdots,L_n$ such that $L_1,\cdots, L_{n-1}$ and $L_n+\overline{L_n}$ are tangential to the boundary, see \cite[page 292]{Boas84}. In this case, by Cauchy-Riemann equations, for any $1\leq j\leq n$ there exist $c_{ij}\in C^{\infty}(\overline{\mathbb{B}^n})$ such that
$$\frac{\partial}{\partial z_j}h=\left(c_{nj}(L_n+\overline{L_n})+\sum_{i=1}^{n-1}c_{ij}L_i\right)h=\mathcal{L}_jh$$
for any holomorphic polynomial $h$. In other words, in this neighborhood of the boundary derivatives of a holomorphic polynomial can be written as a combination of tangential vector fields.

We know that for a tangential vector field $T$, $T(r)=0$ where $r$ is the standard defining function chosen at the beginning. Moreover, since $\mu(z)=\exp\left(\frac{1}{r}\right)$ we have $T(\mu)=0$ too. This means for $f\in W^1(\mathbb{B}^n,\mu)$ and holomorphic polynomial $h$,
\begin{align*}
\left<T(h),f\right>_{\mu}=\left<T(h)\mu,f\right>=\left<T(h\mu),f\right>=\left<h\mu,\widetilde{T}(f)\right>=\left<h,\widetilde{T}(f)\right>_{\mu}
\end{align*}
where $\widetilde{T}$ is a first order differential operator with $C^{\infty}(\overline{\mathbb{B}^n})$ coefficients. Note that no boundary term appears since $T$ is tangential. 

When we combine these two observations we get,
\begin{align*}
\left< \frac{\partial}{\partial z_j}h, f\right>_{\mu}&=\left< \mathcal{L}_j(h), f\right>_{\mu}~\text{ for some tangential vector field } \mathcal{L}_j\\ 
&=\left< h, \widetilde{\mathcal{L}_j}(f)\right>_{\mu}
\end{align*}
where $\widetilde{\mathcal{L}_j}$ is a first order differential operator with $C^{\infty}(\overline{\mathbb{B}^n})$ coefficients.
Now iterating this several times, we get the following. For any multi-index $\beta$, holomorphic polynomial $h$, and  $f\in W^{|\beta|}(\mathbb{B}^n,\mu)$
\begin{align*}
\left< \frac{\partial^{\beta}}{\partial z^{\beta}}h, f\right>_{\mu}
=\left< h, \widetilde{\mathcal{L}_{\beta}}(f)\right>_{\mu}
\end{align*}
for some differential operator $\widetilde{\mathcal{L}_{\beta}}$ of order $|\beta|$ with $C^{\infty}(\overline{\mathbb{B}^n})$ coefficients. Now, Lemma \ref{parts} follows from the Cauchy-Schwarz inequality.

\subsubsection{Proof of Lemma \ref{operator}} This lemma follows from Lemma \ref{coeff} once we define $M_{\beta}$ as follows. For a monomial $z^{\alpha}$,

$$M_{\beta}(z^{\alpha})=\frac{(\alpha+\beta)!(\alpha+\beta)!}{\alpha!(\alpha+2\beta)!} \frac{d_{\alpha}^2}{d_{\alpha+\beta}^2}z^{\alpha+2\beta}.$$

Then $$\frac{||M_{\beta}(z^{\alpha})||_{\mu}}{||z^{\alpha}||_{\mu}}=\frac{(\alpha+\beta)!(\alpha+\beta)!}{\alpha!(\alpha+2\beta)!}\frac{d_{\alpha}d_{\alpha+2\beta}}{d_{\alpha+\beta}^2}.$$
The first fraction is uniformly bounded since
\begin{align*}
\frac{(\alpha+\beta)!(\alpha+\beta)!}{\alpha!(\alpha+2\beta)!}=\frac{\binom{\alpha+\beta}{\beta}}{\binom{\alpha+2\beta}{\beta}}\leq 1.
\end{align*}
The second fraction is uniformly bounded by Lemma \ref{coeff}. Since monomials form an orthogonal basis for $L^2_a(\mathbb{B}^n,\mu)$ we conclude the proof.\\

We note that the proofs of both Lemmas \ref{parts} and \ref{operator} work not only for the particular $\left(\mathbb{B}^n,\mu\right)$ pair but also for any smooth complete Reinhardt domain and any exponentially decaying weight. However, the estimate \eqref{key} on the coefficients requires more work and it is the heart of the matter. It follows from the estimate \eqref{dse} and we present the proof of \eqref{dse} on the unit ball with the specific weight $\mu$. Readers will note that similar arguments work on some more general domains (e.g. complex ellipsoids). 
However, obtaining this sort of decay rate on the $L^2$-norms of monomials is not immediate on general smooth Reinhardt domains with exponentially decaying weights. We leave the general discussion of all complete Reinhardt domains with exponential weights to a future work.

\subsubsection{Proof of Lemma \ref{ds}}

We start with the following estimate, see \cite[Lemma 2.2]{Dostanic04} and \cite[Lemma 1]{Dostanic07}. As $x\to\infty$,
\begin{align}\label{kappa}
\int_0^1 r^{x}\exp\left(\frac{-1}{1-r}\right) dr\approx x^{-1/3}e^{-2\sqrt{x}}.
\end{align}

We obtain \eqref{dse} by taking integrals in radial coordinates and using the estimate \eqref{kappa}. We go over the details for the case $n=2$, the general case follows similarly. We denote the radial image of $\mathbb{B}^2$, the quarter circle in the first quadrant, by $\mathcal{R}$.

\begin{align*}
d_{\gamma}^2&=\int_{\mathbb{B}^2}|z^{\gamma}|^2\mu(z)dV(z)\\
&=4\pi^2\int_{\mathcal{R}}r_1^{2\gamma_1+1}r_2^{2\gamma_2+1}\exp \left(  \frac{-1}{1-(r_1^2+r_2^2)}\right)dr_2dr_1
\end{align*}
where $z_1=r_1e^{i\theta_1}$ and $z_2=r_2e^{i\theta_2}$. Next, we take the integral over $\mathcal{R}$ in radial coordinates. We set $r_1=R\cos\theta$ and $r_2=R\sin\theta$.
\begin{align*}
d_{\gamma}^2&=4\pi^2\int_0^1\int_0^{\pi/2}(R\cos\theta)^{2\gamma_1+1}(R\sin\theta)^{2\gamma_2+1}\exp \left(  \frac{-1}{1-R^2}\right)RdRd\theta\\
&=4\pi^2\int_0^1 R^{2\gamma_1+2\gamma_2+3}\exp \left(  \frac{-1}{1-R^2}\right)dR
\int_0^{\pi/2}(\cos\theta)^{2\gamma_1+1}(\sin\theta)^{2\gamma_2+1}d\theta\\
&=2\pi^2\int_0^1 R^{|\gamma|+1}\exp \left(  \frac{-1}{1-R}\right)dR
\int_0^{\pi/2}(\cos\theta)^{2\gamma_1+1}(\sin\theta)^{2\gamma_2+1}d\theta
\end{align*}
A simple calculation of trigonometric integrals reveals that
\begin{align*}
\int_0^{\pi/2}(\cos\theta)^{2\gamma_1+1}(\sin\theta)^{2\gamma_2+1}d\theta=\frac{1}{2}\frac{\gamma_1!\gamma_2!}{(|\gamma|+1)!}.
\end{align*}
Therefore, for big enough $|\gamma|$, we invoke \eqref{kappa} to get
\begin{align*}
d_{\gamma}^2&\approx \frac{e^{-2\sqrt{|\gamma|+1}}}{(|\gamma|+1)^{1/3}}\frac{\gamma_1!\gamma_2!}{(|\gamma|+1)!}
\end{align*}
where the constant is independent of the multi-index $\gamma$. This estimate concludes the proof of Lemma \ref{ds}.

Once we have the asymptotic behavior of $L^2$-norms of monomials the next proof follows quickly.

\subsubsection{Proof of Lemma \ref{coeff}} 
We remark again that the inequality \eqref{key} (constant independent of $\alpha$) is not immediate. We plug \eqref{dse} back in \eqref{key},

\begin{align*}
\left(\frac{d_{\alpha}d_{\alpha+2\beta}}{d^2_{\alpha+\beta}}\right)^2&\approx 
\frac{e^{-2\sqrt{|\alpha|+1}}}{(|\alpha|+1)^{1/3}}\frac{\alpha_1!\alpha_2!}{(|\alpha|+1)!}
\frac{e^{-2\sqrt{|\alpha+2\beta|+1}}}{(|\alpha+2\beta|+1)^{1/3}}\frac{(\alpha_1+2\beta_1)!(\alpha_2+2\beta_2)!}{(|\alpha+2\beta|+1)!}\\
&\times \left(\frac{(|\alpha+\beta|+1)^{1/3}}{e^{-2\sqrt{|\alpha+\beta|+1}}}\frac{(|\alpha+\beta|+1)!}{(\alpha_1+\beta_1)!(\alpha_2+\beta_2)!}\right)^2\\
&\approx \exp\left[-2\left(\sqrt{|\alpha|+1}+\sqrt{|\alpha+2\beta|+1}-2\sqrt{|\alpha+\beta|+1}\right)\right]\\
&\times\frac{(|\alpha+\beta|+1)^{2/3}}{(|\alpha|+1)^{1/3}(|\alpha+2\beta|+1)^{1/3}}\\
&\times \frac{\alpha_1!\alpha_2!(\alpha_1+2\beta_1)!(\alpha_2+2\beta_2)!}{(\alpha_1+\beta_1)!(\alpha_1+\beta_1)!(\alpha_2+\beta_2)!(\alpha_2+\beta_2)!}\\
&\times \frac{(|\alpha+\beta|+1)!(|\alpha+\beta|+1)!}{(|\alpha|+1)!(|\alpha+2\beta|+1)!}
\end{align*}

The three fractions after the exponential term are uniformly bounded for all $\alpha$. For the first fraction this is immediate and for the next two we can use the Stirling's formula. If we make sure that the expression inside the exponential function 
\begin{align}\label{mean}
-2\left(\sqrt{|\alpha|+1}+\sqrt{|\alpha+2\beta|+1}-2\sqrt{|\alpha+\beta|+1}\right)
\end{align}
is uniformly bounded
then everything will be uniformly bounded too. Indeed,
\begin{align*}
-2\left(\sqrt{|\alpha|+1}+\sqrt{|\alpha+2\beta|+1}-2\sqrt{|\alpha+\beta|+1}\right)&\leq 2\left(\sqrt{|\alpha+\beta|+1}-\sqrt{|\alpha|+1}\right)\\
&=\frac{2|\beta|}{\sqrt{|\alpha+\beta|+1}+\sqrt{|\alpha|+1}}\\
&\leq 2|\beta|.
\end{align*} 
Therefore \eqref{mean} is uniformly bounded. Hence we obtain \eqref{key} and conclude the proof of Lemma \ref{coeff}.\\

By this we conclude the proof of Theorem \ref{Sobolev}. The proof of the key Lemma \ref{coeff} utilized the asymptotic information \eqref{dse}, instead of a Brunn-Minkowski type inequality. For the analog of Theorem \ref{Sobolev} on convex Reinhardt domains, such an inequality is needed.

\section*{Acknowledgements}

We thank the anonymous referee for the careful reading of the paper and constructive feedback. The proofs of Lemmas \ref{four} and \ref{parts} are greatly improved by the referee's suggestions. Also the exposition of the paper benefited a lot from the referee's comments.

\vskip 1cm
\bibliographystyle{alpha}
\bibliography{LpBib}

\end{document}